\def\A{{\mathbb A}}
\def\Q{{\mathbb Q}}
\def\C{{\mathbb C}}
\def\P{{\mathbb P}}
\newcommand*{\longhookrightarrow}{\ensuremath{\lhook\joinrel\relbar\joinrel\rightarrow}}
\newcommand{\Bb}{{\mathscr B}}
\newcommand{\Spec}{{\rm Spec}}
\newcommand{\Bun}{{\Bb un}}
\DeclareFontFamily{OT1}{pzc}{}
\DeclareFontShape{OT1}{pzc}{m}{it}{<->s*[1.21]pzcmi7t}{}
\DeclareMathAlphabet{\mathpzc}{OT1}{pzc}{m}{it}
\def\Spec{\mathrm{Spec}}
\def\Pic{\mathrm{Pic}}
\def\Jac{\mathrm{Jac}}
\def\Pic{\mathrm{Pic}}
\def\L{{\mathcal L}}
\theoremstyle{definition}
\newtheorem{theorem}{Theorem}[section]
\newtheorem{lemma}[theorem]{Lemma}
\newtheorem{definition}[theorem]{Definition}
\newtheorem{example}[theorem]{Example}
\theoremstyle{remark}
\newtheorem{remark}[theorem]{Remark}
\numberwithin{equation}{section}
\begin{document}

\title[Cohomology of moduli stacks of  principal $\C^*$-bundles]
{Cohomology of moduli stacks of principal $\C^*$-bundles over nodal algebraic curves}

\author[Abel Castorena]
{Abel Castorena}
\address{Centro de Ciencias Matem\'aticas
\\Universidad Nacional Aut\'onoma de M\'exico (UNAM) Campus Morelia
\\Morelia, Michoac\'an 
C.P. 58089, , M\'exico
}
\email{abel@matmor.unam.mx}

\author[Frank Neumann]
{Frank Neumann}
\address{Dipartimento di Matematica 'Felice Casorati'\\
Universit\`a di Pavia\\
via Ferrata 5, 27100 Pavia, Italy}
\email{frank.neumann@unipv.it}

\subjclass{Primary 14F35, secondary 14H10}

\keywords{algebraic stacks, moduli of principle bundles, line bundles, nodal algebraic curves, gerbes}

\begin{abstract}
{We study moduli stacks of principal $\C^*$-bundles over nodal complex algebraic curves and determine their rational cohomology algebras in terms of Chern classes.}
\end{abstract}
\maketitle
 

\section*{Introduction}

\noindent The moduli stack of principal $G$-bundles over a smooth algebraic curve and its cohomology is fundamentally important in algebraic geometry, number theory and mathematical physics for example in gauge theory, conformal field theory and the geometric Langlands correspondence. The fundamental work of Atiyah and Bott \cite{AB} implies for example, that for any semisimple complex reductive algebraic group $G$ the rational cohomology algebra of the moduli stack of principal $G$-bundles on a fixed smooth complex projective algebraic curve $X$ is freely generated by the K\"unneth components of the Chern classes of the universal principal $G$-bundle over the moduli stack. This is basically a translation in terms of algebraic stacks and stack cohomology of the classical results obtained by Atiyah and Bott, who originally used topological tools like equivariant cohomology and Morse theory. Such a stacky approach towards the cohomology is for example illustrated by Teleman \cite{T}, Heinloth and Schmitt \cite{HeSch} and Gaitsgory and Lurie \cite{GL}. While for a given smooth algebraic curve these moduli stacks have a good behaviour and the cohomology rings can be calculated, the situation changes if the curve has singularities and only partial results are known, for example, when working with moduli spaces of principal bundles over nodal curves. Recently, Basu, Dan and Kaur (see \cite{BaDaKa, DaKa}) showed that the rational cohomology ring of the moduli space of stable locally-free sheaves of rank $2$ and degree $d$ with fixed determinant over an irreducible, nodal curve with exactly one node, of genus $g\geq 2$ is finitely generated and the generators are natural degenerations of the generators obtained by Newstead \cite{Ne} in the  case of a fixed smooth algebraic curve. 

It is therefore natural to ask if similar calculations could be done if one works with moduli stacks instead of coarse moduli spaces. In particular, one could ask what is the cohomology of moduli stacks of principal bundles over a fixed nodal or stable algebraic curve. 

In this note we will just illustrate the simplest case, namely that of the moduli stack of principal $\C^*$-bundles or line bundles over a given nodal algebraic curves and determine its cohomology. More precisely, we determine the rational cohomology algebra of the the moduli stack of principal $\C^*$-bundles over a fixed nodal algebraic curve of compact type in terms of Chern classes. Algebraic invariants of moduli stacks of principal bundles on nodal algebraic curves were also discussed by Bhosle \cite{Bh}, where she determines the Picard groups of these stacks and by Frenkel, Teleman and Tolland \cite{FrTeTo} in their construction of algebro-geometric Gromov-Witten invariants for the classifying stack $\Bb \C^*$ of principal $\C^*$-bundles.

In a following-up article we will study the cohomology of moduli stacks of more general principal $G$-bundles over nodal algebraic curves.

\section{Moduli stacks of principal $\C^*$-bundles over nodal curves and their cohomology algebras}

\noindent We will now study moduli stacks of principal $\C^*$-bundles or line bundles over nodal algebraic curves and determine their rational cohomology algebras. For the general theory of algebraic stacks and their cohomology we refer to \cite{He, LMB, N} and in particular concerning moduli stack of principal bundles also to \cite{GL, HeSch, So}.

Let us recall some basic constructions and properties of nodal algebraic curves. Let $Y$ be a connected reduced nodal algebraic curve with only ordinary nodes as singularities defined over $\Spec(k)$ for an algebraically closed field $k$. We will normally assume that $k=\C$ is the field of complex numbers. We recall that the dual graph $\Gamma_Y$ of the nodal curve $Y$ is a simplicial complex of dimension at most one, defined to have one vertex (a zero-dimensional simplex) for every irreducible component of $Y$, and one edge (a 1-dimensional simplex) connecting two vertices for every node in which the two corresponding components intersect. So, we assume $\Gamma_Y$ has 
$\gamma_Y$ vertices, $\delta_Y$ edges and among the edges there is a loop for every node lying on a single irreducible component of $Y$. If we denote by $c$ the number of connected components of $Y$ and assume that $\Gamma_Y$ has an orientation, then the first Betti number of $Y$ is given as 
$b_1(\Gamma_Y)=\text{dim}_{\Bbb Z}(H_1(\Gamma_Y,\Bbb Z))=\delta_Y-\gamma_Y+c$.
\vskip2mm

\begin{definition} Let $Y$ be a connected reduced nodal algebraic curve with only ordinary nodes as singularities defined over an algebraically closed field $k$. Let $p\in Y$ be a node of $Y$ and $Y_p\to Y$ the normalization of $Y$ at $p$. We say that $p$ is {\em separating} if the number of connected components of $Y_p$ is greater than the number of connected components of $Y$. The nodal curve $Y$ is of {\em compact type} if all of its nodes are separating. We denote by $\delta_Y$ the number of nodes of $Y$ and by $\gamma_Y$ the number of irreducible components $Y_i$ of $Y$. 
\end{definition}

\begin{remark} Let $Y$ be a connected reduced nodal algebraic curve. $Y$ is of compact type if and only if $\Gamma_Y$ is a tree or if and only if $b_1(\Gamma_Y)=0$. For a nodal curve $Y$ of compact type its Jacobian is compact and each irreducible component of $Y$ is a smooth algebraic curve.
\end{remark}

\noindent Let $Y$ be such a nodal algebraic curve of compact type and let $v:C\to Y$ the normalization of $Y$. We have a short exact sequence
$$1\to\mathcal O^*_Y\to v_*(\mathcal O^*_C)\to\mathcal G\to 1,$$
\noindent where $\mathcal G$ is a skycraper sheaf supported on the singular points of $Y$. The associated long cohomology exact sequence to the above exact sequence is then given as:

\begin{multline*} 
1\to H^0(Y,\mathcal O^*_Y)\to H^0(Y,v_*(\mathcal O^*_C))\to \\\to H^0(Y,\mathcal G)\to H^1(Y,\mathcal O^*_Y)\to H^1(Y,v_*(\mathcal O^*_C)) \to 1
\end{multline*}

\noindent which is given as the exact sequence
$$1\to k^*\to(k^*)^{\gamma_Y}\to(k^*)^{\delta_Y}\to\Pic(Y)\to\Pic(C) \to 0.$$
Therefore we obtain the following exact sequence 
$$1\to(k^*)^{b_1(\Gamma_Y)}\to\Pic(Y)\stackrel{v^*}\to\Pic(C)=\prod_{i=1}^r \Pic(C_i)\to 0$$
where $v^*$ is the pullback of line bundles and $C_i$ is the normalization of $Y_i$.

Now let $Y=\coprod\limits_{i=1}^m Y_i$ be the decomposition of $Y$ into its irreducible components $Y_i$, let $\underline d=(d_1,...,d_m)\in \Bbb Z^ m$, and define 
$$\text{Pic}^{\underline d}(Y):=\{L\in\text{Pic}(Y):\text{deg}(L|_{Y_i})=d_i\}.$$ 
This is the locus parametrizing line bundles of multidegree $\underline d$ on $Y$. Following Caporaso \cite{Ca1, Ca2} we obtain that for every $d\in\Bbb Z$, there exists a Picard scheme $P^d_Y$ parametrizing line bundles of a suitable degree $d$ and we have
$$P^d_Y=\coprod\limits_{\underline d\in\Delta_d,\hskip1mm|\underline d|=d}\text{Pic}^{\underline d}(Y),$$ 
\noindent for a well-defined finite set $\Delta_d$ and where $|\underline d|:=\sum_{i=1}^m d_i$. For a nodal algebraic curve $Y$ of compact type with irreducible components $Y_1,...,Y_m$ we get the following decomposition isomorphism
$$\text{Pic}^{\underline d}(Y)\xrightarrow{\simeq}\text{Pic}^{d_1}(Y_1)\times\cdots\times\text{Pic}^{d_m}(Y_m),\hskip1mm L\to(L|_{Y_1},\dots,L|_{Y_m}).$$
In other words, specifying a line bundle on a nodal curve of compact type is given by fixing a multidegree $\underline d=(d_1,...,d_m)\in \Bbb Z^ m$
and choosing a line bundle of degree $d_i$ on each component $Y_i$ of the nodal curve for every $i=1, 2,\ldots, m$.

Every line bundle can be also interpreted as a principal $\C^*$-bundle. These are classified by the classifying stack $\Bb \C^*$ of the algebraic group $\C^*$. Its cohomology resembles the singular cohomology of the classifying space $BU(1)=\C P^{\infty}$, which plays a similar role in algebraic topology. In fact, we have the following isomorphism (see also \cite{He, N})

\begin{theorem}\label{thm:classifyingstack}
The rational cohomology algebra of the classifying stack $\Bb \C^*$ of principal $\C^*$-bundles is given as the graded $\Q$-algebra
$$H^*(\Bb \C^*, \Q)\cong \Q[c_1],$$
with the generator $c_1$ being the Chern class of degree $2$ of the universal line bundle $\mathcal L^{univ}$ on $\Bb \C^*$.
\end{theorem}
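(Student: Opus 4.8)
The plan is to compute $H^*(\Bb \C^*, \Q)$ by approximating the classifying stack with finite-dimensional smooth projective varieties, which is the standard device for the cohomology of classifying stacks of linear algebraic groups.

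First I would recall that $\Bb \C^* = [\Spec(\C)/\C^*]$ is the quotient of a point by the trivial action of the multiplicative group $\C^* \cong \Gm$, and that the universal line bundle $\mathcal L^{univ}$ is the one associated to the tautological $\C^*$-torsor $\Spec(\C) \to \Bb \C^*$. The geometric input is that $\C^*$ acts freely by scaling on $U_n := \A^{n+1}\setminus\{0\}$ with geometric quotient $\P^n$; the equivariant open immersions $U_n \hookrightarrow U_{n+1}$ have complements of codimension growing with $n$ (equivalently, $U_n$ is topologically $2n$-connected), so the induced morphisms $\P^n = [U_n/\C^*] \to \Bb \C^*$ form an algebraic approximation of the classifying stack. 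Concretely, the pullback $H^k(\Bb \C^*, \Q) \to H^k(\P^n, \Q)$ is an isomorphism for $k \leq 2n$, and under it the class $c_1 = c_1(\mathcal L^{univ})$ restricts to the hyperplane class $h = c_1(\Oo_{\P^n}(1))$.

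Next I would invoke the classical computation $H^*(\P^n, \Q) = \Q[h]/(h^{n+1})$ with $h$ in degree $2$, and pass to the inverse limit over $n$. Since the restriction maps $H^*(\P^{n+1}, \Q) \to H^*(\P^n, \Q)$ are the evident surjections $\Q[h]/(h^{n+2}) \to \Q[h]/(h^{n+1})$, the tower has surjective transition maps, so there is no $\varprojlim^1$ obstruction and
$$H^*(\Bb \C^*, \Q) \cong \varprojlim_n \Q[h]/(h^{n+1}) \cong \Q[c_1],$$
a polynomial algebra on the single degree-$2$ generator $c_1$, as claimed.

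The hard part, and the only step needing genuine care, is the approximation statement that the quotients $\P^n$ compute the cohomology of $\Bb \C^*$ in a range of degrees growing with $n$; this is the algebraic analogue (in the spirit of Totaro and Morel--Voevodsky) of the familiar fact that $\C P^\infty = \varinjlim_n \C P^n$ is the topological classifying space $B\C^* \simeq BU(1)$, whose rational cohomology is $\Q[c_1]$. As an alternative to the approximation argument I could run the Leray--Serre spectral sequence of the fibre sequence $\C^* \to \Spec(\C) \to \Bb \C^*$: since $\C^*$ is rationally a circle, with $H^*(\C^*, \Q) = \Lambda(\alpha)$ for $\alpha$ in degree $1$, and the total space is cohomologically trivial, convergence to $\Q$ forces the transgression $d_2(\alpha) = c_1$ to be an isomorphism in each relevant bidegree, and a standard collapse argument then yields $H^*(\Bb \C^*, \Q) = \Q[c_1]$, exactly as in the computation of $H^*(\C P^\infty, \Q)$.
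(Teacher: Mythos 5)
Your main argument is essentially the paper's own proof: the paper also approximates $\Bb\C^*$ by the quotients $\A^n\setminus\{0\}\to\P^{n-1}$ and deduces $H^t(\Bb\C^*,\Q)\cong H^t(\P^{n-1},\Q)$ in a range growing with $n$, justifying the range exactly by the vanishing of $R^t\phi_*\Q$ for $t<2n-1$ (i.e.\ the high connectivity of $\A^n\setminus\{0\}$) in the Leray spectral sequence of the classifying morphism $\phi:\P^{n-1}\to\Bb\C^*$. Your identification of $c_1$ with the hyperplane class and the passage to the limit match the paper, so the proposal is correct and follows the same route; the fibre-sequence alternative you sketch at the end is a valid variant but is not what the paper does.
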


\begin{proof} The quotient morphism $\pi: \A^n-\{0\}\rightarrow \P^{n-1}$ is a principal $\C^*$-bundle and so we have a cartesian diagram of algebraic stacks with classifying morphism $\phi$
\[
\xy \xymatrix{ \A^n-\{0\} \ar[r] \ar[d]^{\pi} & \Spec(\C) \ar[d]\\
 \P^{n-1}\ar[r]^{\phi} & \Bb \C^*}
\endxy
\]
The Leray spectral sequence for $\phi$ is of the form
$$E^{s,t}_2\cong H^s(\Bb \C^*, R^t\phi_*\Q)\Rightarrow H^*(\P^{n-1}, \Q)$$
and because $R^0\phi_*\Q\cong \Q$ and $R^t\phi_*\Q=0$ if $t< 2n-1$ it follows for $t< 2n-1$ that
$$H^t(\Bb \C^*, \Q)\cong H^t(\P^{n-1}, \Q)$$ 
and therefore 
$$H^*(\Bb \C^*, \Q)\cong \Q[c_1]$$
where $c_1$ is the Chern class of the universal bundle $\L^{univ}$ on the classifying stack $\Bb \C^*$.
\end{proof}

Now let $X$ be a smooth projective irreducible algebraic curve of genus $g\geq 2$ and consider the moduli stack $\Bun_X^{1, d}$ of line bundles of degree $d$ on $X$. A course moduli space for the algebraic stack $\Bun_X^{1, d}$ is given by the Picard scheme 
$\Pic^d_X$ of $X$. This assembles into a $\C^*$-gerbe 
$$\Bun_X^{1, d}\rightarrow \Pic^{d}_X.$$
Furthermore, there exists a Poincar\'e universal family on the product scheme $X\times \Pic_X^d$ which gives a section of the $\C^*$-gerbe and therefore a splitting isomorphism (see \cite{Ra}, \cite[Thm. G]{DrNa} and \cite{He})  of algebraic stacks
$$\Bun_X^{1, d}\cong \Pic^{d}_X \times \Bb\C^*$$
We can now use the K\"unneth theorem to determine the cohomology of the moduli stack $\Bun_X^{1, d}$ and obtain
$$H^*(\Bun_X^{1,d}, \Q)\cong H^*(\Pic^{d}_X, \Q)\otimes H^*(\Bb \C^*, \Q).$$
\noindent The cohomology of the Picard scheme $\Pic^d_X$ is isomorphic to the cohomology of the Jacobian $\Jac(X)$ of the algebraic curve $X$, but the Jacobian is an abelian variety and we have
$$H^*(\Jac(X), \Q)\cong \Lambda_{\Q} (H^1(X, \Q))\cong \Lambda_{\Q}(\alpha_1, \ldots \alpha_{2g}).$$

 Therefore we have determined the rational cohomology algebra of the moduli stack of line bundles on a smooth algebraic curve as follows (compare also \cite{He, HeSch, N})

\begin{theorem}\label{thm:smooth}
Let $X$ be a smooth projective irreducible algebraic curve of genus $g\geq 2$. The cohomology of the moduli stack $\Bun_X^{1, d}$ of line bundles of degree $d$ over $X$ is given as the graded $\Q$-algebra 
$$H^*(\Bun_X^{1,d}, \Q)\cong \Lambda_{\Q}(\alpha_1, \ldots \alpha_{2g})\otimes \Q[c_1].$$
\end{theorem}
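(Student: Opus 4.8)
The plan is to reduce the computation to the two ingredients already assembled immediately before the statement: a product decomposition of the moduli stack, and a description of the cohomology of each factor. First I would invoke the $\C^*$-gerbe structure $\Bun_X^{1,d}\to\Pic^d_X$ together with the Poincar\'e universal family on $X\times\Pic^d_X$, which furnishes a section of the gerbe and thereby trivializes it, yielding the splitting isomorphism $\Bun_X^{1,d}\cong\Pic^d_X\times\Bb\C^*$ of algebraic stacks. This is the only genuinely geometric input, and it is precisely what rigidifies the gerbe and forces a product (rather than merely twisted) structure; the existence of the Poincar\'e bundle is classical and cited above, so I would treat it as given.

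Next I would apply the K\"unneth theorem in the category of algebraic stacks to this product. This step is unproblematic here because one of the two factors, $\Pic^d_X$, is a smooth projective variety and hence has finite-dimensional rational cohomology in each degree, so the K\"unneth map $H^*(\Pic^d_X,\Q)\otimes H^*(\Bb\C^*,\Q)\to H^*(\Bun_X^{1,d},\Q)$ is an isomorphism of graded $\Q$-algebras rather than merely of graded vector spaces.

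It then remains to identify the two tensor factors. For the classifying-stack factor I would quote Theorem~\ref{thm:classifyingstack}, giving $H^*(\Bb\C^*,\Q)\cong\Q[c_1]$ with $c_1$ the degree-$2$ Chern class of the universal line bundle. For the Picard factor I would use that $\Pic^d_X$ is a torsor under $\Jac(X)=\Pic^0_X$ and hence abstractly isomorphic to it, so their rational cohomologies agree; since $\Jac(X)$ is a complex torus of dimension $g$, its rational cohomology is the exterior algebra on $H^1$, that is $H^*(\Jac(X),\Q)\cong\Lambda_\Q(H^1(X,\Q))\cong\Lambda_\Q(\alpha_1,\dots,\alpha_{2g})$ with the $\alpha_i$ a basis of $H^1(X,\Q)$ placed in cohomological degree $1$. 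Feeding these two identifications through the K\"unneth isomorphism produces the claimed algebra $\Lambda_\Q(\alpha_1,\dots,\alpha_{2g})\otimes\Q[c_1]$.

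Because the hard geometric content (the gerbe splitting) is cited, the main point to watch is bookkeeping of the grading and of the multiplicative structure: one must confirm that the exterior generators $\alpha_i$ sit in degree $1$ and the polynomial generator $c_1$ in degree $2$, and that the resulting identification respects the cup product. Both follow from the K\"unneth map being an isomorphism of algebras and from the standard Hodge-theoretic description of the cohomology of an abelian variety, so I do not anticipate a serious obstacle beyond verifying these compatibilities.
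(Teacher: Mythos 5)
Your proposal is correct and follows essentially the same route as the paper: splitting the $\C^*$-gerbe $\Bun_X^{1,d}\to\Pic^d_X$ via the Poincar\'e family, applying the K\"unneth theorem, and identifying the factors as $H^*(\Jac(X),\Q)\cong\Lambda_\Q(H^1(X,\Q))$ and $H^*(\Bb\C^*,\Q)\cong\Q[c_1]$. Your extra remarks on the grading and multiplicativity of the K\"unneth isomorphism are sound and only make explicit what the paper leaves implicit.
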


We will now discuss the more general case of a nodal curve of compact type. For this let us define (compare \cite{Ho})

\begin{definition}
Let $Y$ be connected nodal algebraic curve of compact type over $k$ with smooth irreducible components $Y_1,...,Y_m$ and  $\Bun_Y^{n, d}$ be the moduli stack of vector bundles of rank $n$ and degree $d$ over $Y$. We say that a vector bundle $\mathcal F$ over a dense open substack $\mathscr U\subseteq\Bun_Y^{n,d}$ has {\em weight} $w\in\Bbb Z$ if the diagram 

$$
\xymatrix{
k^*\ar[d]^{(-)^w}&\hspace*{-0.7cm}\longhookrightarrow \text{Aut}_{\mathcal O_Y}(E) \ar[d] \\
k^* &\hspace*{-0.7cm}\longhookrightarrow       \text{Aut}_k(\mathcal F_E) }
$$

\noindent commutes for all vector bundles $E$ over $Y$ that are objects of the groupoid $\mathscr U(k)$ of $k$-rational points of 
$\mathscr U$ and where $\mathcal F_E$ is the associated vector space over $k$.
\end{definition}

\begin{example}\label{ex:sections} Denote by $\mathcal E^{\text{univ}}$ the universal vector bundle of rank $n$ and degree $d$ over the algebraic stack $Y\times\Bun_Y^{n,d}$, and for a given rational point $p\in Y(k)$ denote by $\mathcal E^{\text{univ}}_p$ its restriction to $\{p\}\times \Bun_Y^{n,d}\cong\Bun_Y^{n,d}$. We see that $\mathcal E^{\text{univ}}_p$ is a vector bundle of weight $1$ on $\Bun_Y^{n,d}$ and its dual is a vector bundle of weight $-1$. 
\end{example}

The case of line bundles is particularly important as it provides general criteria for the splitting of $\C^*$-gerbes, namely we have
\cite[Lemma 3.10]{He}

\begin{lemma}\label{lem:sections} For a $\C^*$-gerbe of algebraic stacks $\Phi: \mathscr{X}\rightarrow \mathscr{Y}$ the following are equivalent conditions:
\begin{itemize}
\item[(i)] The $\C^*$-gerbe of algebraic stacks $\Phi: \mathscr{X}\rightarrow \mathscr{Y}$ has a section.
\item[(ii)] There is an isomorphism of algebraic stacks $\mathscr{X} \cong \Bb\C^*\times \mathscr{Y}$.
\item[(iii)] There exists a line bundle of weight 1 on the algebraic stack $\mathscr{X}$.
\end{itemize}
A $\C^*$-gerbe satisfying any of the above equivalent conditions is also called {\em neutral}.
\end{lemma}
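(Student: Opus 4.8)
The plan is to prove the chain (i) $\Rightarrow$ (ii) $\Rightarrow$ (i) together with (ii) $\Rightarrow$ (iii) $\Rightarrow$ (ii), which closes the loop of equivalences. Throughout I would rely on the standard description of a $\C^*$-gerbe $\Phi\colon\mathscr{X}\to\mathscr{Y}$: the morphism $\Phi$ is locally (on $\mathscr{Y}$) equivalent to the projection from $\Bb\C^*\times\mathscr{Y}$, the automorphism sheaf of every object of $\mathscr{X}$ is canonically identified with the band $\C^*$, and the isomorphism classes of such gerbes are classified by $H^2(\mathscr{Y},\Gm)$, with the zero class corresponding to the split gerbe $\Bb\C^*\times\mathscr{Y}$. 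I would also use that a line bundle on a stack $\mathscr{X}$ is the same datum as a morphism $\mathscr{X}\to\Bb\C^*$, since $\Bb\C^*=\Bb\Gm$ is the classifying stack of line bundles.

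For (i) $\Rightarrow$ (ii), given a section $s\colon\mathscr{Y}\to\mathscr{X}$ of $\Phi$, I would use $s$ to rigidify the gerbe. Since $\C^*$ is abelian the band is automatically trivial, and the existence of $s$ forces the class of the gerbe in $H^2(\mathscr{Y},\Gm)$ to vanish. Concretely, I would build the equivalence $\mathscr{X}\xrightarrow{\simeq}\Bb\C^*\times\mathscr{Y}$ by sending an object $x$ of $\mathscr{X}$ over $U$ to the pair $\bigl(\Isom(s(\Phi(x)),x),\,\Phi(x)\bigr)$, where the first entry is the $\C^*$-torsor of isomorphisms from the distinguished object $s(\Phi(x))$ to $x$. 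For the converse (ii) $\Rightarrow$ (i), the trivial torsor gives a basepoint $\Spec(k)\to\Bb\C^*$, and the induced map $\mathscr{Y}\cong\Spec(k)\times\mathscr{Y}\to\Bb\C^*\times\mathscr{Y}\cong\mathscr{X}$ is a section of $\Phi$.

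For (ii) $\Rightarrow$ (iii), I would pull back the universal line bundle $\mathcal L^{univ}$ of Theorem \ref{thm:classifyingstack} along the projection $\Bb\C^*\times\mathscr{Y}\to\Bb\C^*$; by the weight computation in Example \ref{ex:sections}, $\mathcal L^{univ}$ carries the tautological action of the band through the identity character, so its pullback is a line bundle of weight $1$ on $\mathscr{X}$. For the decisive implication (iii) $\Rightarrow$ (ii), a weight-$1$ line bundle $L$ on $\mathscr{X}$ assigns to each object $x$ its fiber $L_x$, a one-dimensional space on which $\Aut(x)=\C^*$ acts through the identity character $z\mapsto z$. Viewing $L$ as its classifying morphism $\psi_L\colon\mathscr{X}\to\Bb\C^*$, the weight-$1$ condition says precisely that $\psi_L$ restricts to the identity on every automorphism group. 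I would then form $(\psi_L,\Phi)\colon\mathscr{X}\to\Bb\C^*\times\mathscr{Y}$ and check it is an isomorphism: it is an isomorphism on automorphism sheaves by the weight computation, and the gerbe axioms (local triviality of $\Phi$ together with descent) promote this to a global equivalence.

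The main obstacle is this last step (iii) $\Rightarrow$ (ii). The content is not the existence of the morphism $(\psi_L,\Phi)$, which is formal, but the verification that it is fully faithful and essentially surjective. This is exactly where the value of the weight matters: because the weight is \emph{equal to} $1$ and not merely some integer, the map $\psi_L$ identifies the inertia $\C^*$ of $\mathscr{X}$ with the inertia of the $\Bb\C^*$-factor, so that $(\psi_L,\Phi)$ matches the automorphism groups on both sides; a line bundle of weight $w\neq\pm1$ would only produce a multiplication-by-$w$ map on inertia and would not trivialize the gerbe. All the remaining implications are straightforward consequences of the gerbe structure and the functoriality of pullback of line bundles.
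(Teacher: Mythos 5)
Your proof is correct. Note that the paper itself does not prove this lemma at all: it is quoted verbatim from the literature (Heinloth, \emph{Lectures on the moduli stack of vector bundles on a curve}, Lemma 3.10), so there is no in-paper argument to compare against. Your chain (i) $\Rightarrow$ (ii) $\Rightarrow$ (i) together with (ii) $\Rightarrow$ (iii) $\Rightarrow$ (ii) is the standard proof of that cited result, and you correctly isolate the one substantive point, namely that the weight being exactly $1$ is what makes $(\psi_L,\Phi)$ an isomorphism on inertia, after which the morphism of gerbes over $\mathscr{Y}$ inducing the identity on bands is automatically an equivalence.
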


With these preparations, we can now determine the rational cohomology algebra of the moduli stack of line bundles of degree $d$ over a nodal algebraic curve $Y$ of compact type.

\begin{theorem} Let $Y$ be a nodal algebraic curve of compact type with irreducible smooth components $Y_1,...,Y_m$. 
The cohomology of the moduli stack $\Bun_Y^{1, d}$ of line bundles of degree $d$ over $Y$ is the graded $\Q$-algebra
$$H^*(\Bun_Y^{1,d},\Q)\cong [\bigoplus\limits_{i=1}^m \Lambda_{\Q}(\alpha^{(i)}_1, \ldots \alpha^{(i)}_{2g_i})]\otimes \Q[c_1],$$
where the $\alpha^{(i)}_j$ for $i=1,\ldots m;\; j=1, \ldots 2g_i$ are generators corresponding to the first cohomology classes of the irreducible smooth components $Y_i$ of $Y$ and $c_1$ corresponds to the first Chern class of the universal bundle $\L^{univ}$ on the classifying stack $\Bb \C^*$.
\end{theorem}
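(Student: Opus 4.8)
The plan is to present $\Bun_Y^{1,d}$ as a neutral $\C^*$-gerbe over a product of Jacobians and then compute by the K\"unneth theorem, mirroring the argument for the smooth case in Theorem~\ref{thm:smooth}. Writing $\underline d=(d_1,\dots,d_m)$ with $|\underline d|=d$ for the multidegree, recall from the discussion preceding Theorem~\ref{thm:smooth} that specifying a degree-$d$ line bundle on the compact-type curve $Y$ amounts to fixing such a multidegree together with a line bundle of degree $d_i$ on each component $Y_i$. First I would record that the coarse moduli space of $\Bun_Y^{1,\underline d}$ is $\Pic^{\underline d}(Y)$, and that the compact-type hypothesis ($b_1(\Gamma_Y)=0$) makes the pullback $v^*$ an isomorphism, so that
$$\Pic^{\underline d}(Y)\xrightarrow{\simeq}\Pic^{d_1}(Y_1)\times\cdots\times\Pic^{d_m}(Y_m).$$
Since $Y$ is connected, every line bundle on $Y$ has automorphism group $H^0(Y,\mathcal O_Y^*)=\C^*$, so the morphism $\Bun_Y^{1,\underline d}\to\Pic^{\underline d}(Y)$ to the coarse space is a $\C^*$-gerbe.

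Next I would show this gerbe is neutral. The moduli stack carries its tautological universal line bundle $\mathcal E^{\mathrm{univ}}$ on $Y\times\Bun_Y^{1,\underline d}$, and by Example~\ref{ex:sections} (with $n=1$) its restriction $\mathcal E^{\mathrm{univ}}_p$ to $\{p\}\times\Bun_Y^{1,\underline d}$ at a rational point $p\in Y(k)$ is a line bundle of weight $1$. By Lemma~\ref{lem:sections}, implication (iii)$\Rightarrow$(ii), the gerbe splits, giving an isomorphism of algebraic stacks
$$\Bun_Y^{1,\underline d}\cong \Bb\C^*\times\Pic^{\underline d}(Y)\cong \Bb\C^*\times\prod_{i=1}^m\Pic^{d_i}(Y_i).$$
The K\"unneth theorem then yields
$$H^*(\Bun_Y^{1,\underline d},\Q)\cong H^*(\Bb\C^*,\Q)\otimes\bigotimes_{i=1}^m H^*(\Pic^{d_i}(Y_i),\Q).$$

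Finally I would identify the factors. By Theorem~\ref{thm:classifyingstack}, $H^*(\Bb\C^*,\Q)\cong\Q[c_1]$ with $c_1$ the first Chern class of $\L^{univ}$; and since each $Y_i$ is smooth of genus $g_i$, the variety $\Pic^{d_i}(Y_i)$ has the cohomology of the abelian variety $\Jac(Y_i)$, so that $H^*(\Pic^{d_i}(Y_i),\Q)\cong\Lambda_{\Q}(H^1(Y_i,\Q))\cong\Lambda_{\Q}(\alpha^{(i)}_1,\dots,\alpha^{(i)}_{2g_i})$. Using that the exterior algebra of a direct sum is the tensor product of the exterior algebras, $\bigotimes_{i=1}^m\Lambda_\Q(\alpha^{(i)}_1,\dots,\alpha^{(i)}_{2g_i})\cong\Lambda_\Q\big(\bigoplus_{i=1}^m H^1(Y_i,\Q)\big)$, which is the bracketed factor in the statement; tensoring with $\Q[c_1]$ gives the claimed algebra. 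I expect the one genuinely load-bearing point to be the reduction in the first step: it is the compact-type hypothesis that forces $v^*$ to be an isomorphism and removes the toric kernel $(\C^*)^{b_1(\Gamma_Y)}$ appearing in the general exact sequence for $\Pic(Y)$. Without it the gerbe base would acquire an extra factor $(\C^*)^{b_1(\Gamma_Y)}$ and the cohomology would pick up a corresponding exterior algebra on odd-degree classes, so that everything downstream (neutrality and K\"unneth) hinges on having first isolated the pure product of Jacobians.
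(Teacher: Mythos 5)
Your proposal is correct and follows essentially the same route as the paper's own proof: split the $\C^*$-gerbe $\Bun_Y^{1,d}\to P^d_Y$ using the weight-one restriction $\mathcal E^{\mathrm{univ}}_p$ together with Example~\ref{ex:sections} and Lemma~\ref{lem:sections}, reduce $\Pic^{\underline d}(Y)$ to $\prod_{i}\Pic^{d_i}(Y_i)$ via the compact-type hypothesis, and finish with K\"unneth and Theorem~\ref{thm:classifyingstack}. The only notable difference is in your favour: you record the K\"unneth factor correctly as the tensor product $\bigotimes_{i=1}^m\Lambda_{\Q}(\alpha^{(i)}_1,\dots,\alpha^{(i)}_{2g_i})\cong\Lambda_{\Q}\bigl(\bigoplus_{i=1}^m H^1(Y_i,\Q)\bigr)$, which is how the bracketed $\bigoplus$ in the statement must be interpreted, since a literal direct sum of exterior algebras is not what K\"unneth yields for a product of Jacobians.
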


\begin{proof} We have a $\C^*$-gerbe of algebraic stacks $$\Bun_Y^{1, d}\rightarrow \P_Y^d$$
and using the universal line bundle $\mathcal L$ of degree $d$ on the algebraic stack $Y\times \Bun_Y^{1,d}$ we get from Example \ref{ex:sections} and Lemma \ref{lem:sections} a splitting of algebraic stacks
$$\Bun_Y^{1,d}\cong P^d_Y\times \Bb \C^*$$
since we have the decomposition
$$P^d_Y=\coprod\limits_{\underline d\in\Delta_d,\hskip1mm|\underline d|=d}\Pic^{\underline d}(Y),$$ 
where $$\Pic^{\underline d}(Y)\xrightarrow{\simeq}\Pic^{d_1}(Y_1)\times\cdots\times\Pic^{d_m}(Y_m).$$ 
For each irreducible smooth component $Y_i$ being of a genus $g_i\geq 2$ we get thus from Theorem \ref{thm:smooth}

$$
\begin{aligned}
H^*(\Bun_{Y_i}^{1,d_i},\Q)&\cong H^*(\Pic^{d_i}(Y_i), \Q)\otimes H^*(\Bb \C^*, \Q)\\
&\cong \Lambda_{\Q}(\alpha^{(i)}_1, \ldots \alpha^{(i)}_{2g_i})\otimes \Q[c_1]
\end{aligned}
$$
Using again the K\"unneth decomposition for rational cohomology we can hence determine the rational cohomology algebra of the moduli stack 
$\Bun_Y^{1,d}$ of line bundles on the nodal curve $Y$ and therefore get from the discussions above

$$
\begin{aligned}
H^*(\Bun_Y^{1,d},\Q) &\cong [(\bigoplus\limits_{i=1}^m H^*(\Pic^{d_i}(Y_i),\Q))]\otimes H^*(\Bb \C^*, \Q)&\\
&\cong\bigoplus\limits_{i=1}^m[H^*(\text{Pic}^{d_i}(Y_i), \Q)\otimes H^*(\Bb \C^*, \Q)]&\\
& \cong \bigoplus\limits_{i=1}^m H^*(\Bun_{Y_i}^{1,d_i}, \Q). &\\
& \cong [\bigoplus\limits_{i=1}^m \Lambda_{\Q}(\alpha^{(i)}_1, \ldots \alpha^{(i)}_{2g_i})]\otimes \Q[c_1]
\end{aligned}
$$
\noindent with the desired generators of the rational cohomology algebra.
\end{proof} 

\begin{remark}
Determining the rational cohomology algebra of moduli stacks of general principal $G$-bundles or vector bundles over a fixed nodal algebraic curve is much harder. The moduli stacks of principal $G$-bundles over nodal curves don't behave as good as moduli stacks of line bundles, for example, they are not complete. It will be more convenient to work instead with moduli stacks of Gieseker bundles or torsion free sheaves over nodal curves (compare \cite{Ka}). These moduli stacks and their cohomology will be discussed in a follow-up article by the authors.
\end{remark}

\section*{Acknowledgements}
\noindent The first author is supported by Grant PAPIIT UNAM IN100723, ``Curvas, sistemas lineales en superficies proyectivas y fibrados vectoriales".  The second author would like to thank Centro de Ciencias Matem\'aticas (CCM), UNAM Campus Morelia, for the wonderful hospitality and financial support through the Programa de Estancias de Investigaci\'on (PREI) de la Direcci\'on General Asuntos del Personal Acad\'emico, DGAPA-UNAM. He also likes to thank the organizers of the XIII Annual International Conference of the Georgian Mathematical Union in Batumi for the kind invitation and great hospitality. Both authors thank the referees for their valuable comments.


\begin{thebibliography}{99}

\bibitem{AB} M. F. Atiyah, R. Bott, The Yang-Mills equation over Riemann surfaces, {\em Philosoph. Trans. Roy. Soc. London} {\bf 308} A (1982), 523--615.

\bibitem{BaDaKa} S. Basu, A. Dan and I.  Kaur, Generators for the cohomology ring, after Newstead, {\em Proc. Amer. Math. Soc.} {\bf 150} (6) (2022), 2569--2577.

\bibitem{Bh} U. N. Bhosle, Principal $G$-bundels on nodal curves, {\em Proc. Indian Acad. Sci. Math. Sci.} {\bf 111}, No. 3 (2001), 271--291.

\bibitem{Ca1} L. Caporaso, A compactification of the Universal Picard Variety over the Moduli Space of Stable Curves, {\em J. Amer. Math. Soc.} {\bf 7} (3) (1994), 589--660.

\bibitem{Ca2} L. Caporaso, Compactifying Moduli Spaces, {\em Bull. Amer. Math. Soc. (N.S.)} {\bf 57} (3) (2020), 455--482.

\bibitem{DaKa} A. Dan, I. Kaur, Generalization of a conjecture of Mumford, {\em Adv. Math.} {\bf 383} (2021), 107676.

\bibitem{DrNa} J.-M. Drezet, M. S. Narasimhan, Groupes de Picard des vari\'et\'es de modules de fibr\'es semi-stables sur les courbes alg\'ebriques, {\em Invent. Math.} {\bf 97} (1989), 53--94.

\bibitem{FrTeTo} E. Frenkel, C. Teleman, C. and A. J. Tolland, Gromov-Witten gauge theory, {\em Adv. Math.} {\bf 288} (2016), 201--239.

\bibitem{GL} D. Gaitsgory, J. Lurie, {\em Weil's Conjecture for Function Fields: Volume I}, Annals of Math. Studies, vol. {\bf 199}, Princeton University Press, Princeton, N. J., 2019.

\bibitem{HR}J. Hall, D. Rydh,  General Hilbert Stacks and Quot Schemes, {\em Michigan Math. J.} {\bf 64} (2015), 335--347.

\bibitem{He} J. Heinloth, Lectures on the moduli stack of vector bundles on a curve. in: A. Schmitt (eds.), {\em Affine Flag Manifolds and Principal Bundles}, Trends in Mathematics. Birkh\"auser, Basel (2010), 123--153.

\bibitem{HeSch} J. Heinloth, A. H. W. Schmitt, The cohomology rings of moduli stacks of principal bundles over curves, {\em Doc. Math.} {\bf 15} (2010), 423--488.

\bibitem{Ho} N. Hoffmann, Moduli stacks of vector bundles on curves and the King-Schofield rationality proof. in:  F. Bogomolov (eds.), Cohomological and geometric approaches to rationality problems. New Perspectives. Progress in Mathematics. Birkh\"auser, Boston Vol. {\bf 282} (2010), 133--148.

\bibitem{Ka} I. Kausz, A Gieseker type degeneration of moduli stacks of vector bundles of vector bundles on curves, {\em Trans. AMS} {\bf 357} (12) (2004), 4897--4955.

\bibitem{LMB} G. Laumon, L. Moret-Bailly, {\em Champs alg{\'e}briques},  Ergeb. der Mathematik (3. Folge), {\bf 39}, Springer-Verlag, Berlin (2000).

\bibitem{MFK} D. Mumford, J. Fogarthy and F. Kirwan, {\em Geometric Invariant Theory}, Ergeb. der Math. Grenz. {\bf 34}, 3rd ed., Springer-Verlag, Berlin (1992).

\bibitem{N} F. Neumann, {\em Algebraic stacks and moduli of vector bundles}, IMPA, Publica\c{c}oes Matematicas, IMPA Research Monographs. PM {\bf 36}, Rio de Janeiro 2009. 

\bibitem{NS} F. Neumann, U. Stuhler, Moduli stacks of vector bundles and Frobenius morphisms, Algebra and Number Theory, Proceedings of the Silver Jubilee Conference (Hyderabad, India, 2003), Delhi (2005), 126--146.

\bibitem{Ne} P. E. Newstead,  Characteristic classes of stable bundles of rank $2$ over an algebraic curve, {\em Trans. Amer. Math. Soc.} {\bf 169} (1972), 337--345.

\bibitem{Ol} M. Olsson, {\em Algebraic Spaces and Algebraic Stacks}, Colloquium Publ. Vol. {\bf 62}, American Mathematical Society, Providence, Rhode Island 2016.

\bibitem{Ra} S. Ramanan, The moduli spaces of vector bundles over an algebraic curve. {\em Math. Ann.} {\bf 200} (1973), 69--84.

\bibitem{So} C. Sorger, Lectures on moduli of principal $G$-bundles over algebraic curves. {\it School on Algebraic Geometry (Trieste, 1999)}, ICTP Lecture Notes, Vol. 1, Abdus Salam Int. Cent. Theor. Phys. Trieste (2000), 1--57.

\bibitem{T} C. Teleman, Borel-Weil-Bott theory on the moduli stack of $G$-bundles over a curve, {\em Invent. Math.} {\bf 134} (1998), 1--57.

\end{thebibliography}
\end{document}